\theoremstyle{plain}
\newtheorem{thm}{Theorem}[section]
\newtheorem{prop}[thm]{Proposition}
\newtheorem{lemma}[thm]{Lemma}
\theoremstyle{definition}
\newtheorem{defi}[thm]{Definition}
\theoremstyle{remark}
\newtheorem*{rem}{Remark}
\title{Moduli of Autodual Instanton Bundles }
\author{Marcos Jardim, Simone Marchesi, Anna Wi\ss dorf}
\DeclareMathOperator{\im}{im}
\DeclareMathOperator{\coker}{coker}
\DeclareMathOperator{\End}{End}
\DeclareMathOperator{\Hom}{Hom}
\DeclareMathOperator{\rk}{rk}
\newcommand{\KK}{{\mathbb K}}
\newcommand{\PP}{{\mathbb P}}
\newcommand{\OO}{{\mathcal O}}
\newcommand{\EE}{{\mathcal E}}
\newcommand{\FF}{{\mathcal F}}
\begin{document}

\begin{abstract}
We provide a description of the moduli space of framed autodual instanton bundles on projective space, focusing on the particular cases of symplectic and orthogonal instantons. Our description will use the generalized \emph{ADHM} equations which define framed instanton sheaves.
\end{abstract}

\maketitle

\section*{Introduction}
Instanton bundles, which represent an important link between algebraic geometry and mathematical physics, were first introduced in 1978 by Atiyah, Drinfeld, Hitchin and Manin on the projective space $\PP^3$ (see \cite{ADHM}), and they were further generalized in 1986 for projective spaces of odd dimension by Okonek and Spindler in \cite{OS}, motivated by \cite{MCS}.

Since then, the study of these objects and their moduli spaces has been of continued interest for various authors. In dimension $3$, the nonsingularity and irreducibility of the moduli space of rank $2$ instantons bundles on $\PP^3$ remained open for many years, see for instance \cite[Introduction]{CTT} and the references therein. More recently, Tikhomirov proved in \cite{Tik1,Tik2} that the moduli space of rank $2$ instanton bundles on $\PP^3$ with arbitrary second Chern class is irreducible; Markushevich and Tikhomirov proved that it is rational; Jardim and Verbitsky proved in \cite{JV} that such moduli spaces are always nonsingular.

For higher dimensional projective spaces or higher rank, the moduli space of instanton bundles become much more complicated, so many authors have considered instanton bundles with some additional structure. To name just a few, Spindler and Trautmann studied the moduli of special instanton bundles in \cite{ST}, while Costa and Ottaviani proved in \cite{CO2} that the moduli space of symplectic instanton bundles on $\PP^{2n+1}$ is affine. More recently, Bruzzo, Markushevich and Tikhomirov identified an irreducible component of expected dimension in the moduli space of symplectic instanton bundles on $\PP^3$ \cite{BMT}; Costa, Hoffmann, Mir\'o-Roig and Schmitt further study symplectic instanton bundles on $\PP^{2n+1}$ \cite{CHMRS}. On the other hand, Farnik, Frapporti and Marchesi proved in \cite{orth} that there are no orthogonal instantons bundles of rank $2n$ on $\PP^{2n+1}$.

In this paper we consider \emph{autodual instantons} of arbitrary rank on projective spaces, focusing in particularly on symplectic and orthogonal instantons. To be more precise, an autodual instanton is a pair $(\EE,\varphi)$ consisting of an instanton bundle $\EE$ and an isomorphism $\varphi:\EE\to\EE^\vee$. Note that $\EE$ is symplectic when $\varphi=-\varphi^\vee$ and orthogonal when $\varphi=\varphi^\vee$.

We first describe the moduli space of framed autodual instanton bundles using the \emph{ADHM} construction introduced by Henni, Jardim and Martins in \cite{HJM}. This allows us to provide a description of the moduli spaces of symplectic and orthogonal instantons using matrices, a description that is useful in establishing existence and non-existence results for orthogonal instantons.

The paper is organized as follows. In Section \ref{prel} we will fix some notations and definitions. Section \ref{auto} is dedicated to the investigation of the matrices involved in the \emph{ADHM} construction of framed autodual instanton bundles and the description of their moduli spaces in terms of such matrices. In Sections \ref{sym} and \ref{ort} we will deal  with the symplectic and orthogonal cases respectively. 

It turns out that orthogonal instanton bundles are somewhat hard to find, as indicated by the Farnik--Frapporti--Marchesi non-existence result mentioned above. We prove, in addition, that there are no orthogonal instanton bundles with odd second Chern class and no orthogonal instanton bundles of rank $2$ and charge $2$ on $\PP^2$. However, we provide examples of orthogonal instantons of rank $2$ and charges $4$ and $6$ on $\PP^2$, and indicate how to obtain examples of charge $2k$ for every $k\ge4$. Finally, we also provide explicit examples of orthogonal instantons of rank $4$ and charge $2$ on $\PP^3$.

\paragraph{\bf Acknowledgments.}
MJ is partially supported by the CNPq grant number 302477/2010-1 and the FAPESP grant number 2011/01071-3. 
SM is supported by the FAPESP post-doctoral grant number 2012/07481-1. We also thank the University of Campinas and Freie Universit\"at Berlin for financial support.


\section{Framed instanton bundles}\label{prel}

In this section we fix the notation used throughout this paper and recall some definitions. We work over an algebraically closed field of characteristic $0$, which we denote by $\KK$. We will denote by $\EE^\lor$ the dual of a sheaf (or bundle) $\EE$, by $V^\lor$ the dual of the vector space $V$ over $\KK$ and by $\alpha^\lor : \FF^\lor \longrightarrow \EE^\lor$ the dual of the sheaf (or bundle) map $\alpha: \EE \longrightarrow \FF$. We use the same notation for duality of linear applications of vector spaces.

\begin{defi}[Instanton Sheaf]
A coherent sheaf $\mathcal{E}$ on $\mathbb{P}^n$ is called an \textit{instanton sheaf} of charge $c$ and rank $r$ if it is defined as the cohomology of a linear monad
 \[
\begin{xy}
 \xymatrix{
  \OO^{\oplus c}_{\PP^n}(-1) \ar[r]^a & \OO^{\oplus r+2c}_{\PP^n} \ar[r]^b & \OO^{\oplus c}_{\PP^n}(1),
  }
\end{xy}
\]
where, by definition of a monad, we have $b\circ a=0$ and $a$ (respectively $b$) is an injective (respectively surjective) sheaf map. 
Moreover, if $\EE$ is locally free, we call it an \textit{instanton bundle}.
\end{defi}

The total Chern class of an instanton sheaf is then $c(\EE)=\frac{1}{(1-t^2)^c}$. In particular, we have $c_1(\EE)=0$ and $c_2(\EE)=c>0$.

  Alternatively, there is the following cohomological characterization \cite{J-i}. A torsion-free sheaf $\EE$ on $\PP^n$ ($n\geq 2$) is an instanton sheaf if $c_1(\EE)=0$ and
\begin{enumerate}
 \item $H^0(\EE(-1))=H^n(\EE(-n))=0$;
 \item $H^1(\EE(-2))=H^{n-1}(\EE(1-n))=0$, if $n\ge 3$;
 \item $H^p(\EE(k))=0$ for $2\leq p\leq n-2$ and all $k$, if $n\geq 4$.
\end{enumerate}

\begin{defi}[Framing]
We say that a coherent sheaf $\EE$ on $\PP^n$ is of \emph{trivial splitting type} if there is a line $l\subset \PP^n$ such that $\EE_{|l} \simeq \OO_{l}^{\oplus r}$; a \emph{framing at $l$} is the choice of an isomorphism $\Phi : \EE_{|l} \longrightarrow \OO_{l}^{\oplus r}$. Fixing the line $l$, a \emph{framed instanton sheaf} is a pair $(\EE,\Phi)$ consisting of an instanton sheaf $\EE$ of trivial splitting type and a framing $\Phi$ at the line $l$.

A morphism of framed instanton sheaves $f\colon (\EE_1,\Phi_1)\rightarrow (\EE_2,\Phi_2)$ is a morphism of sheaves $f\colon\EE_1\rightarrow\EE_2$ that respects the framing, in other words
\[
\begin{xy}
 \xymatrix{
 (\EE_1)_{|l} \ar[r]^{\Phi_1}\ar[d]_{f_{|l}} & \OO_{l}^{\oplus r_1} \ar[d]^{\bar{f}} \\
 (\EE_{2})_{|l} \ar[r]^{\Phi_2} & \OO_{l}^{\oplus r_2} 
 }
\end{xy}
\]
is a commutative diagram.
\end{defi}

\subsection*{\emph{ADHM} construction of instanton bundles}

Let us recall the \emph{ADHM} construction for framed instanton bundles, whose details can be found in \cite{HJM}.

Let $V$ and $W$ be $\KK$-vector spaces of dimension $\dim V=c$ and $\dim W = r$. An $(n-2)$-\emph{dimensional ADHM datum} is given by linear maps $(A_k, B_k, I_k, J_k)$, for $k=0,\ldots,n-2$, where $A_k, B_k\in \End(V)$, $I_k\in \Hom(W,V)$ and $J_k\in \Hom(V,W)$.\\
Choose homogeneous coordinates $[x:y:z_0:\ldots:z_{n-2}]$ on $\PP^n$ and define
\begin{align*}
 A := \sum_{i=0}^{n-2}A_iz_i, \:\:\:
 B:= \sum_{i=0}^{n-2}B_iz_i ,  \:\:\:I := \sum_{i=0}^{n-2}I_iz_i, \:\:\:J := \sum_{i=0}^{n-2}J_iz_i.
\end{align*}

Set also $\mathbb{B} := (\End(V)\oplus\End(V)\oplus\Hom(W,V)\oplus\Hom(V,W))\otimes H^0(\PP^{n-2},\OO_{\PP^{n-2}}(1))$,
and consider the map
\begin{align*}
 \mu\colon\mathbb{B}\rightarrow \End(V)\otimes H^0(\PP^{n-2},\OO_{\PP^{n-2}}(2)) \\
 (A,B,I,J)\mapsto [A,B]+IJ.
 \end{align*}
Here, we let $z_0,\ldots,z_{n-2}$ be a basis of $H^0(\PP^{n-2},\OO_{\PP^{n-2}}(1))$. 

\begin{defi}($(n-2)$-dimensional \emph{ADHM} datum)
  The subset $\mu^{-1}(0)\subset\mathbb{B}$ is called the set of all $(n-2)$-\emph{dimensional ADHM data}. An element $X=(A,B,I,J)\in\mu^{-1}(0)$ satisfies the \emph{ADHM} equation $[A,B] + IJ = 0$.
 \end{defi}
 
Given an element $X=(A,B,I,J)\in\mathbb{B}$ we define $\widetilde{W}:= V\oplus V\oplus W$ and a sequence
\begin{equation}\label{eq-mon}
\begin{xy}
 \xymatrix{
  V\otimes \OO_{\PP^n}(-1) \ar[r]^{\alpha} & \widetilde{W}\otimes\OO_{\PP^n} \ar[r]^{\beta} & V\otimes \OO_{\PP^n}(1)
  }
\end{xy}
\end{equation}
where the maps involved are given by
\[
 \alpha=\left( \begin{matrix}
                A+\mathbf{1}x \\
                B+\mathbf{1}y \\
                J
               \end{matrix}
\right), \
\beta=\left( \begin{matrix}
              -B-\mathbf{1}y & A+\mathbf{1}x & I
             \end{matrix}
 \right).
\]
It is straightforward to check that the vanishing of the composition $\beta\circ\alpha=0$ is equivalent to the property $X=(A,B,I,J)\in\mu^{-1}(0)$, i.e. $X$ being an \emph{ADHM} datum. We are two steps away for (\ref{eq-mon}) to define an instanton bundle, therefore we need a little bit more.

\begin{defi}[Regularity]
 An \emph{ADHM} datum $X=(A,B,I,J)$ is called \textit{globally regular}, if the fibre map $\alpha_p$ is injective for every $p\in\PP^{n}$ and the fibre map $\beta_p$ is surjective for every $p\in\PP^{n}$.
\end{defi}

\begin{rem}
The Definition above is different from \cite[Definition 2.2]{HJM}; however, both definitions are equivalent by \cite[Proposition 3.3]{HJM}.  
\end{rem}

Regularity of $X$ guarantees that the cohomology sheaf $\EE=\ker(\beta)/\im(\alpha)$ is a locally free sheaf. 
 
Instanton bundles defined by monads arising from \emph{ADHM} data will always have trivial splitting type. 
In fact, they are trivial on the line $l=\left\{ z_0=\ldots =z_{n-2}=0 \right\}$ with fibre given by $W$, i.e. $\EE_{|l}\cong W\otimes\OO_l$; this isomorphism provides a framing at the line $l$. 

We let $GL(V)$ act on $\mathbb{B}$ by defining
\[ g.(A,B,I,J) = \left(\sum_{i=0}^{n-2} gA_ig^{-1}, \sum_{i=0}^{n-2} gB_ig^{-1}, \sum_{i=0}^{n-2} gI_i, \sum_{i=0}^{n-2} J_ig^{-1} \right) . \]
One checks that the set $\mu^{-1}(0)$ is $GL(V)$-invariant. This enables us to identify \emph{ADHM} data that produce isomorphic cohomology bundles.

\begin{thm}[Theorems 3.8 and 4.2 in \cite{HJM}]
There is a 1-1-correspondence between the following objects:
 \begin{itemize}
  \item $GL(V)$-orbits of globally regular solutions $(A,B,I,J)$ to $[A,B]+IJ=0$;
  \item isomorphism classes of framed instanton bundles with rank $r=\dim W$ and charge $c=\dim V$.
 \end{itemize}
\end{thm}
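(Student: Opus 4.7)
The plan is to set up a two-way construction between ADHM data and framed instanton bundles and then verify that the assignments are inverse to each other up to the natural equivalences ($GL(V)$-action on one side, framed isomorphism on the other). I would split the argument into four parts: (i) from an ADHM datum to a framed instanton bundle, (ii) equivariance under the $GL(V)$-action, (iii) a recipe going back from a framed instanton bundle to an ADHM datum, and (iv) verifying that the two constructions are mutually inverse at the level of equivalence classes.

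For the forward direction, I start with a globally regular solution $(A,B,I,J)$ to $[A,B]+IJ=0$ and form the sequence \eqref{eq-mon}. The ADHM equation is precisely the condition $\beta\circ\alpha=0$, so we have a complex; global regularity upgrades this to a monad whose middle cohomology $\EE$ is locally free. Reading Chern classes off the monad gives $c_1(\EE)=0$ and $c_2(\EE)=\dim V=c$, while the cohomology vanishings characterising instanton sheaves follow from chasing the long exact sequences associated with the two short exact sequences
\[
0\to V\otimes\OO(-1)\to \ker(\beta)\to\EE\to0,\qquad
0\to\ker(\beta)\to \widetilde{W}\otimes\OO\to V\otimes\OO(1)\to0,
\]
using the standard cohomology of line bundles on $\PP^n$. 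For the framing, I restrict $\alpha$ and $\beta$ to the distinguished line $l=\{z_0=\dots=z_{n-2}=0\}$: since $A,B,I,J$ are linear in the $z_i$, on $l$ one reads $\alpha|_l=(x,y,0)^T$ and $\beta|_l=(-y,x,0)$, so a direct computation identifies $\EE|_l$ canonically with $W\otimes\OO_l$, furnishing a framing $\Phi$.

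For equivariance I observe that an element $g\in GL(V)$ acts on the monad \eqref{eq-mon} by $(g,g,1_W)$ on the middle term and by $g$ on the outer terms, which commutes with $\alpha,\beta$ in exactly the way required by the formula for $g.(A,B,I,J)$; hence the induced map on cohomology is an isomorphism that is the identity on $l$, so $GL(V)$-equivalent data yield isomorphic framed bundles.

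For the backward direction, I would use the cohomological characterization of instanton sheaves to build a Beilinson-type monad whose cohomology is $\EE$: the terms are forced to be $V\otimes\OO(-1)$, $\widetilde{W}\otimes\OO$ and $V\otimes\OO(1)$ with $\dim V=c$, and by splitting off the trivial factor on the middle term using the framing $\Phi$ at $l$ one obtains $\widetilde{W}=V\oplus V\oplus W$. Expanding $\alpha$ and $\beta$ in the coordinates $x,y,z_0,\dots,z_{n-2}$ and normalising the $(x,y)$-parts by the framing condition produces matrices of the exact shape in \eqref{eq-mon}, from which we read off linear maps $(A,B,I,J)$; the condition $\beta\alpha=0$ is the ADHM equation and the injectivity/surjectivity of the fibre maps is global regularity. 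The main obstacle is exactly this normalisation step: one must argue that the framing at $l$ rigidifies the monad enough to force the $(x,y)$-blocks of $\alpha,\beta$ to be $(1,1,0)^T$ and $(-1,1,0)$, reducing the residual gauge freedom on $\widetilde{W}$ to the action of $GL(V)$ on $V$; this is where the $GL(V)$-orbit, and not the datum itself, is the right invariant. Finally, round-trip checks in both directions show the two constructions are mutually inverse on equivalence classes, completing the bijection.
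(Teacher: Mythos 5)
This theorem is not proved in the paper at all: it is imported verbatim from \cite{HJM} (Theorems 3.8 and 4.2) and used as a black box, so there is no in-paper argument to compare yours against. Judged on its own terms, your forward direction is correct and coincides with the construction the paper recalls in Section \ref{prel}: the ADHM equation is $\beta\circ\alpha=0$, global regularity gives local freeness of the cohomology, the Chern classes and the instanton vanishings follow from the display sequences, and on $l=\{z_0=\cdots=z_{n-2}=0\}$ all of $A,B,I,J$ vanish, so the restricted monad splits off an exact Koszul complex and leaves $\EE|_l\cong W\otimes\OO_l$ canonically; the $GL(V)$-equivariance check is also fine.

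The substance of the theorem, however, lies entirely in your parts (iii) and (iv), and there your text is a plan rather than a proof. Two things are asserted but not established. First, that a framed instanton bundle of trivial splitting type admits a monad display whose middle term can be identified with $V\oplus V\oplus W$ with the $x$- and $y$-coefficient blocks of $\alpha$ and $\beta$ normalised to identity matrices in the positions required by \eqref{eq-mon}: this needs an actual Beilinson-type (or restriction) argument identifying the relevant cohomology groups, and it is precisely the content of Theorem 4.2 of \cite{HJM}; saying the terms ``are forced'' does not supply it. Second, injectivity of the correspondence: you must show that two globally regular data producing isomorphic framed bundles lie in the same $GL(V)$-orbit, which requires lifting an isomorphism of framed bundles to an isomorphism of monads (the paper later invokes Lemma II.4.1.3 of \cite{oss} for exactly this kind of lifting) and then checking that the monad automorphisms compatible with the framing and the normalised blocks are exactly those of the form $(g,\,g\oplus g\oplus \mathbf{1}_W,\,g)$ with $g\in GL(V)$. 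You correctly identify the normalisation as ``the main obstacle'', but flagging the obstacle is not overcoming it; as written, the proposal constructs a well-defined map from orbits to isomorphism classes and leaves both its surjectivity and its injectivity at the level of intent.
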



\section{Autodual instanton bundles}\label{auto}

We shall now state the precise definitions of the objects in which we are interested.

\begin{defi}[Autoduality]
We call an instanton bundle $\EE$ \emph{autodual} if it is isomorphic to its dual, i.e. there exists an isomorphism  $\varphi\colon\EE\rightarrow\EE^\lor$. If, in addition, $\EE$ is framed by $\Phi$, and the isomorphism $\varphi$ preserves the framing, then we say that $(\EE,\Phi)$ is a framed autodual instanton bundle.
\end{defi}

We  denote a framed autodual instanton bundle by the triple $(\EE,\Phi,\varphi)$, where $\Phi\colon \EE_{|l}\rightarrow W\otimes\OO_{l}$ denotes the framing at the line $l$ and $\varphi\colon\EE\rightarrow\EE^\lor$ the isomorphism that respects the framing.

Let $\mathcal{F}_{\PP^n}^{a}(r,c)$ denote the set of isomorphism classes of framed autodual instanton bundles $(\EE,\Phi,\varphi)$ of rank $r$ and second Chern class $c_2(E)=c$. In the sequel, we will also consider unframed autodual instantons bundles (which are of trivial splitting type, but no framing is fixed), we denote their set of isomorphism classes by $\mathcal{M}_{\PP^n}^{a}(r,c)$. Obviously, there is the map $\mathcal{F}_{\PP^n}^{a}(r,c)\rightarrow\mathcal{M}_{\PP^n}^{a}(r,c)$ that forgets the framing.

The purpose of this section is to see how an isomorphism $\varphi\colon\EE\rightarrow\EE^\lor$ of an instanton bundle to its dual bundle is reflected in the corresponding monad and the \emph{ADHM} data. We will conclude the section by describing the moduli space of framed autodual instanton bundles using the relations obtained.

In \cite[Proposition 2.21]{AO} it is proven that the dual of an instanton bundle is again an instanton bundle. However, we will need the following stronger result. 

\begin{lemma}
If $\EE$ is an instanton bundle, then its dual bundle $\EE^\lor$ is the cohomology of the monad  which is dual to the monad that defines $\EE$. In particular, $\EE^\lor$ is again instanton.
\end{lemma}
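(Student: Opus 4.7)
The plan is to split the defining monad
\[
0 \to \OO_{\PP^n}(-1)^{\oplus c} \xrightarrow{a} \OO_{\PP^n}^{\oplus r+2c} \xrightarrow{b} \OO_{\PP^n}(1)^{\oplus c} \to 0
\]
into two short exact sequences via its display, dualize each, and splice them together. Setting $K := \ker(b)$, the display of the monad is
\[
0 \to \OO_{\PP^n}(-1)^{\oplus c} \xrightarrow{a} K \to \EE \to 0, \qquad 0 \to K \to \OO_{\PP^n}^{\oplus r+2c} \xrightarrow{b} \OO_{\PP^n}(1)^{\oplus c} \to 0.
\]

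The first step is to check that $K$ is locally free: by the definition of a monad, $b$ is surjective as a sheaf map onto a locally free sheaf, so $K$ is the kernel of a surjection between locally free sheaves and is therefore locally free itself. Consequently every sheaf in the two short exact sequences above is locally free, and the functor $\Hom(-,\OO_{\PP^n})$ preserves their exactness.

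Dualizing both sequences and splicing them along $K^\lor$ produces the complex
\[
0 \to \OO_{\PP^n}(-1)^{\oplus c} \xrightarrow{b^\lor} \OO_{\PP^n}^{\oplus r+2c} \xrightarrow{a^\lor} \OO_{\PP^n}(1)^{\oplus c} \to 0.
\]
Here $b^\lor$ is injective as a sheaf map, being the dual of a surjection between locally free sheaves, and $a^\lor$ is surjective, since $a$ is a locally split injection (its cokernel $K$ being locally free); so this complex is itself a monad. Its middle cohomology is identified with $\EE^\lor$ as follows: the second dualized sequence yields $\OO_{\PP^n}^{\oplus r+2c}/\im(b^\lor) \simeq K^\lor$, while the first realizes $\EE^\lor$ as $\ker\bigl(K^\lor \to \OO_{\PP^n}(1)^{\oplus c}\bigr)$, and pulling back gives $\EE^\lor \simeq \ker(a^\lor)/\im(b^\lor)$.

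Since the dual complex has exactly the same numerical shape as the original monad, its cohomology $\EE^\lor$ is automatically an instanton bundle of the same rank $r$ and charge $c$, which gives the ``in particular'' statement. The only substantive point in the argument is the local freeness of $K$; once this is in place, the remainder is a routine diagram chase with Hom-duality.
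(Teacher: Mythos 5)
Your proof is correct and follows essentially the same route as the paper: split the monad into the two short exact sequences of its display, use local freeness of $K=\ker b$ and of $\EE$ to dualize both sequences exactly, and splice along $K^\lor$ to recover the dual monad with cohomology $\EE^\lor$. The only blemish is a naming slip in the parenthetical justifying the surjectivity of $a^\lor$ --- the relevant cokernel of $a$ is $\EE$ (or $Q=\coker a$), not $K$ --- but the locally free object you actually need there is $\EE$, which is locally free by hypothesis, so the argument stands.
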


\begin{proof}
Considering the monad which defines $\EE$
 \begin{equation}\label{
 }
\begin{xy}
 \xymatrix{
  \OO_{\PP^n}^{\oplus c}(-1) \ar[r]^{\alpha} & \OO_{\PP^n}^{\oplus r+2c} \ar[r]^{\beta} & \OO_{\PP^n}^{\oplus c}(1),
  }
\end{xy}
\end{equation}
we can split it in two pairs of short exact sequences of vector bundles, respectively
\begin{equation}\label{eq-I}
  \left\{  
  \begin{array}{ll}
  0 \longrightarrow K \longrightarrow \OO_{\PP^n}^{\oplus r+2c} \stackrel{\beta}{\longrightarrow} \OO_{\PP^n}^{\oplus c}(1) \longrightarrow 0,
& K= \ker\beta \vspace{3mm}\\
  0 \longrightarrow  \OO_{\PP^n}^{\oplus c}(-1) \stackrel{\alpha}{\longrightarrow}  K \longrightarrow  \EE \longrightarrow 0,
& \EE=\ker\beta/\im\alpha
\end{array}
 \right.
\end{equation}
and
\begin{equation}\label{eq-II}
  \left\{  
  \begin{array}{ll}
  0 \longrightarrow \OO_{\PP^n}^{\oplus c}(-1) \stackrel{\alpha}{\longrightarrow}  \OO_{\PP^n}^{\oplus r+2c} \longrightarrow Q  \longrightarrow 0,
& Q= \coker\alpha \vspace{3mm}\\
  0 \longrightarrow \EE \longrightarrow Q \longrightarrow \OO_{\PP^n}^{\oplus c}(1) \longrightarrow 0.
          \end{array} 
 \right.
\end{equation}
If $\EE$ is locally free, we can dualise (\ref{eq-I}) and we obtain
\begin{equation}\label{eq-III}
 \left\{  \begin{array}{ll}
  0 \longrightarrow \OO_{\PP^n}^{\oplus c}(-1) \stackrel{\beta^\lor}{\longrightarrow}  \OO_{\PP^n}^{\oplus r+2c} \longrightarrow K^\lor \longrightarrow 0\vspace{3mm}\\
  0 \longrightarrow \EE^\lor \longrightarrow K^\lor \stackrel{\alpha^\lor}{\longrightarrow}  \OO_{\PP^n}^{\oplus c}(1) \longrightarrow 0.
          \end{array} 
 \right.
\end{equation}
Comparing (\ref{eq-III}) with (\ref{eq-II}) and taking in consideration the isomorphism $(\ker\beta)^\lor\cong\coker\beta^\lor$, it is possible to reconstruct the following monad
\begin{equation}\label{eq-dualmon}
\begin{xy}
 \xymatrix{
  \OO_{\PP^n}^{\oplus c} (-1) \ar[r]^{\beta^\lor} & \OO_{\PP^n}^{\oplus r+2c} \ar[r]^{\alpha^\lor} & \OO_{\PP^n}^{\oplus c} (1).
  }
\end{xy}
\end{equation}
We therefore conclude that when $\EE$ is the bundle defined by (\ref{eq-mon}), its dual $\EE^\lor$ is the cohomology of the monad (\ref{eq-dualmon}), which is dual to (\ref{eq-mon}), and this proves the statement.
\end{proof}

\begin{rem}
The same argument works for more general monads of locally free sheaves
  \[
\begin{xy}
 \xymatrix{
  \mathcal{A} \ar[r]^{\alpha} & \mathcal{B} \ar[r]^{\beta} & \mathcal{C}
  }
\end{xy}
\]
whose cohomology is locally free. However, in general, the argument is false if $\mathcal{E}$ is not locally free, because dualising (\ref{eq-I}) would not give (\ref{eq-III}) in this case.
\end{rem}

Let us return to our case.  Recall that if we consider the vector bundles $\EE$ and $\EE'$ defined respectively as the cohomology of the monads 
$$
\xymatrix{
M: &  \OO_{\PP^n}^{\oplus c} (-1) \ar[r] & \OO_{\PP^n}^{\oplus r+2c} \ar[r] & \OO_{\PP^n}^{\oplus c} (1)
  }
\
$$
and
$$
\xymatrix{
M': &  \OO_{\PP^n}^{\oplus c'} (-1) \ar[r] & \OO_{\PP^n}^{\oplus r'+2c'} \ar[r] & \OO_{\PP^n}^{\oplus c'} (1),
  }
\
$$
we obtain a bijection 
$$
\Hom(\EE,\EE') \longrightarrow \Hom(M,M')
$$
between homomorphisms of bundles and homomorphisms of monads (see for example Lemma II.4.1.3 in \cite{oss}). Hence, an isomorphism $\varphi\colon\EE\rightarrow\EE^\lor$ can be lifted to an isomorphism of monads 

\[
\begin{xy}
 \xymatrix{
 V\otimes \OO_{\PP}(-1) \ar[r]^{\alpha}\ar[d]^{G_1} & \widetilde{W}\otimes\OO_{\PP} \ar[r]^{\beta}\ar[d]^{F} & V\otimes \OO_{\PP}(1) \ar[d]^{G_2} \\
 V^\lor\otimes \OO_{\PP}(-1) \ar[r]^{\beta^\lor} & \widetilde{W}^\lor\otimes\OO_{\PP} \ar[r]^{\alpha^\lor} & V^\lor\otimes \OO_{\PP}(1).
 }
\end{xy}
\]

Remember that $\widetilde{W}=V\oplus V\oplus W$, so we are allowed to write $F$ into block form

\[
F=\left(\begin{matrix}
                 F_1 & F_2 & F_3 \\
                 F_4 & F_5 & F_6 \\
                 F_7 & F_8 & F_9
\end{matrix}
\right).
\]

The commutativity of the diagram gives us relations for the maps involved, in particular the left half gives us $F\alpha=\beta^\lor G_1$ and the right half gives us $G_2\beta=\alpha^\lor F$. Using the description of $\alpha$ and $\beta$ given by the \emph{ADHM} construction and the description of $F$ through blocks, we are able to compute the following relations.

First of all observe that the bundle morphisms $G_1, G_2$ are given by vector spaces maps, which we will denote the same way, $G_1, G_2 \colon V\rightarrow V^\lor$. Commutativity of the left half reduces to

\begin{align*}
 F\alpha &= \left( \begin{matrix}
                 F_1 & F_2 & F_3 \\
                 F_4 & F_5 & F_6 \\
                 F_7 & F_8 & F_9
               \end{matrix}
 \right) 
 \left( \begin{matrix}
              A+\mathbf{1}x \\
              B+\mathbf{1}y \\
              J
             \end{matrix}
 \right) =
 \left( \begin{matrix}
              F_1A+F_1x+F_2B+F_2y+F_3J \\
              F_4A+F_4x+F_5B+F_5y+F_6J \\
              F_7A+F_7x+F_8B+F_8y+F_9J
             \end{matrix}
 \right),
  \\
 \beta^\lor G_1&=\left( \begin{matrix}
              -B^\lor-\mathbf{1}y \\
              A^\lor+\mathbf{1}x \\
              I^\lor
             \end{matrix}
 \right)G_1 =
 \left( \begin{matrix}
              -B^\lor G_1-G_1y \\
              A^\lor G_1+G_1x \\
              I^\lor G_1
             \end{matrix}
 \right),
\end{align*}

from which we obtain $F_1=F_5=F_7=F_8=0$ and $F_2=-G_1$, $F_4=G_1$. The right half then reduces to

\begin{align*}
 \alpha^\lor F &= \left( \begin{matrix}
              A^\lor+\mathbf{1}x & B^\lor+\mathbf{1}y & J^\lor
             \end{matrix}
 \right) 
 \left( \begin{matrix}
                 0 & -G_1 & F_3 \\
                 G_1 & 0 & F_6 \\
                 0 & 0 & F_9
               \end{matrix}
 \right) \\
 &=\left( \begin{matrix}
              B^\lor G_1+G_1y & -A^\lor G_1-G_1x & A^\lor F_3+F_3x+B^\lor F_6+F_6y+J^\lor F_9
             \end{matrix}
 \right),
  \\
 G_2\beta&= G_2 \left( \begin{matrix}
              -B-\mathbf{1}y & A+\mathbf{1}x & I
             \end{matrix}
 \right) =
 \left( \begin{matrix}
              -G_2B-G_2y & G_2A+G_2x & G_2I
             \end{matrix}
 \right),
\end{align*}
and we get $F_3=F_6=0$ and furthermore $G_1=-G_2$.

The following result is a straightforward consequence of the previous considerations.

\begin{lemma}\label{lem mor} 
A framed autodual instanton bundle is given by a morphism of linear monads
\begin{equation}\label{duality diagram}
\begin{xy}
 \xymatrix{
 V\otimes \OO_{\PP}(-1) \ar[r]^{\alpha}\ar[d]^{-G} & \widetilde{W}\otimes\OO_{\PP} \ar[r]^{\beta}\ar[d]^{F} & V\otimes \OO_{\PP}(1) \ar[d]^{G} \\
 V^\lor\otimes \OO_{\PP}(-1) \ar[r]^{\beta^\lor} & \widetilde{W}^\lor\otimes\OO_{\PP} \ar[r]^{\alpha^\lor} & V^\lor\otimes \OO_{\PP}(1)
 }
\end{xy}
\end{equation}
where $$F=\left( \begin{matrix}
                 0 & G & 0 \\
                 -G & 0 & 0 \\
                 0 & 0 & H
               \end{matrix}
 \right)\colon V\oplus V\oplus W\rightarrow (V\oplus V\oplus W)^\lor $$ with isomorphisms  $G\colon V\rightarrow V^\lor$ and $H\colon W\rightarrow W^\lor$. Furthermore, the data $(A,B,I,J,G,H)$ satisfies the following \emph{duality relations}:

\begin{equation}\label{duality}
\begin{aligned}
 GB &= B^\lor G  \\
 GA &= A^\lor G \\
 HJ &= -I^\lor G \\
 GI &= J^\lor H.
\end{aligned} 
\end{equation}
\end{lemma}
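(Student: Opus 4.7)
My plan is to package into a clean proof the computation already carried out in the paragraphs preceding the lemma. The starting point is the bijection between bundle morphisms and monad morphisms (Lemma II.4.1.3 in \cite{oss}), combined with the previous lemma, which identifies $\EE^\lor$ with the cohomology of the dual monad. Under this bijection, the given isomorphism $\varphi : \EE \to \EE^\lor$ lifts uniquely to a morphism of monads $(G_1,F,G_2)$ from the monad defining $\EE$ to its dual, and the entire task reduces to pinning down $F$, $G_1$, $G_2$.

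Since $\widetilde{W} = V \oplus V \oplus W$, I would write $F$ as a $3\times 3$ block matrix with entries $F_1,\ldots,F_9$, and expand the two commutativity squares $F\alpha = \beta^\lor G_1$ and $\alpha^\lor F = G_2 \beta$ using the \emph{ADHM} expressions for $\alpha$ and $\beta$. Each of these becomes an identity of polynomials in the coordinates $x,y,z_0,\ldots,z_{n-2}$ on $\PP^n$. I would then separate, in each matrix entry, the part involving the formal variables $x,y$ (the ``constant'' contribution) from the part linear in the $z_i$'s (which carries the ADHM data $A,B,I,J$).

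Matching the constant contributions in both squares determines the shape of $F$ exactly as in the computation above the lemma: five of the nine blocks vanish, the remaining off-diagonal blocks on the $V\oplus V$ part are $\pm G_1$, and the relation $G_1=-G_2$ drops out. Setting $G:=G_2=-G_1$ and $H:=F_9$ yields the block form of $F$ claimed in the statement. Then, substituting this shape back into the $z_i$-linear parts of the same two squares collapses them to exactly four equations, which are the four duality relations. Finally, $G$ and $H$ are isomorphisms because $F$, $G_1$, $G_2$ must be (as $\varphi$ is an isomorphism of bundles, its monad lift is an isomorphism of monads), and $H$ is the data expressing compatibility with the framing under the identification $\EE_{|l}\cong W\otimes\OO_l$.

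The main obstacle is not conceptual but a matter of bookkeeping: tracking signs carefully across the two squares so that the duality relations appear with the signs in the statement. In particular, because $G_1$ and $G_2=-G_1$ both occur during the derivation while only $G=G_2$ survives in the final form, one must check that the third row of the left square produces $HJ=-I^\lor G$ (and not $+I^\lor G$) while the third column of the right square produces $GI=J^\lor H$, and that the first two rows/columns consistently give $GA=A^\lor G$ and $GB=B^\lor G$ rather than their negatives.
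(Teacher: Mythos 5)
Your proposal is correct and follows essentially the same route as the paper: lift $\varphi$ to a monad morphism via the bijection of Lemma II.4.1.3 in \cite{oss} and the identification of $\EE^\lor$ with the cohomology of the dual monad, write $F$ in block form, and match the constant ($x,y$) and linear ($z_i$) parts of the two commutativity squares to force the shape of $F$, the relation $G_1=-G_2$, and the four duality relations. The sign bookkeeping you flag does come out as claimed (e.g.\ the third row of the left square gives $HJ=I^\lor G_1=-I^\lor G$ since $G_1=-G$), so there is no gap.
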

\begin{proof}
It only remains for us to observe that the duality relations follow from the commutativity of the diagram (\ref{duality diagram}).
\end{proof}

Next, in order to describe moduli spaces of framed autodual bundles, we need to extend the $GL(V)$-action on the \emph{ADHM} data $(A,B,I,J)$ to the extended data $(A,B,I,J,G,H)$. 

Indeed, a change of coordinates $g\in GL(V)$ induces the change of coordinates in the dual vector space given by $g^\lor\in GL(V^\lor)$. The action will be given by
\[ g.G := (g^\lor)^{-1}Gg^{-1}\colon V\rightarrow V^\lor, \]
such that the action of $GL(V)$ on the extended data is
\[g.(A,B,I,J,G,H) = (gAg^{-1},gBg^{-1},gI,Jg^{-1},(g^\lor)^{-1}Gg^{-1},H).  \]

In light of everything seen until now, we can state the following result.

\begin{prop}\label{prop-modaut}
 For fixed $r>0$ and $c>0$ we have a bijection
 \[ \mathcal{F}_{\PP^n}^{a}(r,c) \cong \left\{ (A,B,I,G,H) \right\}/_{GL(V)} \]
 where $(A,B,I,G,H)$ satisfies the following:
 \begin{itemize}
  \item $(A,B,I,-H^{-1}I^\lor G)$ is a regular ADHM datum.
  \item $GA=A^\lor G$, $GB=B^\lor G$, $GIH^{-1}+G^\lor I(H^\lor)^{-1}=0$.
 \end{itemize}

\end{prop}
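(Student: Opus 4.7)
The plan is to reduce the proposition to Lemma \ref{lem mor} combined with the HJM correspondence, by observing that once $G$ and $H$ are invertible, the datum $J$ is redundant: the third duality relation $HJ=-I^\vee G$ forces $J=-H^{-1}I^\vee G$, so a five-tuple $(A,B,I,G,H)$ determines a six-tuple $(A,B,I,J,G,H)$ uniquely.

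For the forward direction, starting from $(\EE,\Phi,\varphi)\in\mathcal{F}_{\PP^n}^{a}(r,c)$, I would apply Lemma \ref{lem mor} to extract $(A,B,I,J,G,H)$ with $G$ and $H$ isomorphisms, satisfying the four duality relations (\ref{duality}). Since $(\EE,\Phi)$ is in particular a framed instanton bundle, the underlying datum $(A,B,I,J)$ is a regular solution of $[A,B]+IJ=0$ by the HJM theorem. Solving $J=-H^{-1}I^\vee G$ from the third relation and substituting this into the fourth relation $GI=J^\vee H$, after dualising using $(H^{-1})^\vee=(H^\vee)^{-1}$ one obtains $GI=-G^\vee I(H^\vee)^{-1}H$, which after right multiplication by $H^{-1}$ becomes exactly $GIH^{-1}+G^\vee I(H^\vee)^{-1}=0$. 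Together with the unaltered relations $GA=A^\vee G$ and $GB=B^\vee G$, this yields the two bullet conditions.

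For the converse direction, given $(A,B,I,G,H)$ satisfying the bullets (with $G,H$ isomorphisms, as implicit from the presence of $H^{-1}$), I would define $J:=-H^{-1}I^\vee G$ and run the previous calculation in reverse to recover all four duality relations of Lemma \ref{lem mor}. Since $(A,B,I,J)$ is assumed regular, the HJM correspondence produces a framed instanton bundle $(\EE,\Phi)$, and the block matrix $F$ with entries $G,-G,H$ assembled as in Lemma \ref{lem mor} gives an isomorphism of the monad (\ref{eq-mon}) with its dual, whose induced map on cohomology is a framed autoduality $\varphi\colon\EE\to\EE^\vee$.

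Finally I would check $GL(V)$-equivariance of the elimination $J=-H^{-1}I^\vee G$. Under $g\in GL(V)$ acting on the extended data as described before the proposition, one computes
\[
-H^{-1}(gI)^\vee\bigl((g^\vee)^{-1}Gg^{-1}\bigr)=-H^{-1}I^\vee g^\vee (g^\vee)^{-1}Gg^{-1}=-H^{-1}I^\vee Gg^{-1}=Jg^{-1},
\]
which is precisely the prescribed transformation of $J$. Hence the correspondence descends to a bijection of $GL(V)$-orbits, as claimed. The main nuisance throughout is the transposition bookkeeping, especially the identity $(H^{-1})^\vee=(H^\vee)^{-1}$ and ensuring each relation is multiplied on the correct side; but no genuinely hard step is involved beyond careful dualisation.
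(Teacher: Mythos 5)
Your proposal is correct and takes essentially the same route as the paper's proof: lift $\varphi$ to a monad morphism via Lemma \ref{lem mor}, eliminate $J$ through the relation $HJ=-I^\lor G$ so that the fourth duality relation becomes $GIH^{-1}+G^\lor I(H^\lor)^{-1}=0$, and verify $GL(V)$-invariance. You merely spell out the converse direction and the equivariance computation that the paper leaves implicit.
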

\begin{proof}
 We have already observed that fixing a framed autodual instanton bundle gives us an isomorphism between the defining monads, therefore we get the extended \emph{ADHM} data $(A,B,I,J,G,H)$ satisfying the duality relations (\ref{duality}).
 
Observe also that $J=-H^{-1}I^\lor G$ by the duality relations and thus $GIH^{-1}-J^\lor=0$, which gives the last relation. One checks that all relations are $GL(V)$-invariant. Furthermore, the isomorphism $H\colon W\rightarrow W^\lor$ fits into the commutative diagram
\begin{equation}\label{H-Phi}
\begin{xy}
 \xymatrix{
 \EE_{|l} \ar[r]^{\Phi}\ar[d]_{\varphi|_l} & W\otimes\OO_{l} \ar[d]^{H} \\
 \EE^\lor_{|l} \ar[r]^{(\Phi^\lor)^{-1}} & W^\lor\otimes\OO_{l} 
 }
\end{xy}
\end{equation}
which determines the framing of $\EE^\lor$.
\end{proof}

Two particular classes of autodual instanton bundles have been previously considered in the literature, and will be our focus for the remainder of this paper: symplectic and orthogonal instanton bundles.

\begin{defi}
\begin{enumerate}
\item A \textit{framed symplectic instanton bundle} $(\mathcal{E},\Phi,\varphi)$ is a framed autodual instanton bundle $(\mathcal{E},\Phi,\varphi)$ such that the isomorphism $\varphi\colon\mathcal{E}\rightarrow\mathcal{E}^\lor$ satisfies $\varphi^\lor=-\varphi$. We denote by $\mathcal{F}_{\PP^n}^{s}(r,c)$ the moduli space of framed symplectic instanton bundles and by $\mathcal{M}_{\PP^n}^{s}(r,c)$ the moduli space of symplectic instanton bundles of trivial splitting type.
\item A \textit{framed orthogonal instanton bundle} $(\mathcal{E},\Phi,\varphi)$ is a framed autodual instanton bundle $(\mathcal{E},\Phi,\varphi)$ such that the isomorphism $\varphi\colon\mathcal{E}\rightarrow\mathcal{E}^\lor$ satisfies $\varphi^\lor=\varphi$. We denote by $\mathcal{F}_{\PP^n}^{o}(r,c)$ the moduli space of framed orthogonal instanton bundles and by $\mathcal{M}_{\PP^n}^{o}(r,c)$ the moduli space of orthogonal instanton bundles of trivial splitting type.
\end{enumerate}
\end{defi}

Our next result characterises symplectic and orthogonal instantons in terms of the data $(A,B,I,J,G,H)$.

\begin{lemma}\label{lem-dual}
Let the framed autodual instanton bundle $\EE$ be given by data $(A,B,I,J,G,H)$.
 \begin{enumerate}
  \item $\EE$ is symplectic if and only if $H$ is antisymmetric and $G$ is symmetric.
  \item $\EE$ is orthogonal if and only if $H$ is symmetric and $G$ is antisymmetric. 
 \end{enumerate}
\end{lemma}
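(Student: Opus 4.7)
The plan is to use the bijection $\Hom(M,M^\lor)\cong\Hom(\EE,\EE^\lor)$ recalled just before Lemma \ref{lem mor} to translate the condition $\varphi^\lor=\pm\varphi$ into an equation between monad morphisms. By Lemma \ref{lem mor}, the lift of $\varphi$ is the triple $(-G,F,G)$ appearing as the vertical arrows in diagram (\ref{duality diagram}), with $F$ in the prescribed block form involving $G$ and $H$. The strategy is to compute the lift of $\varphi^\lor$ as the dual of this triple and invoke uniqueness of the lift to force the two to agree, up to sign.

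The first step is to work out the monad morphism that dualisation produces. Dualising the morphism of complexes (\ref{duality diagram}) reverses the cohomological grading, so the component at degree $+1$ of the dual is the dual of the component at degree $-1$ of the original, and vice versa. Applied to the triple $(-G,F,G)$ (at degrees $-1,0,+1$), this yields the triple $(G^\lor,F^\lor,-G^\lor)$, which by the bijection must coincide with the lift of $\varphi^\lor$.

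Equating the two lifts gives the conditions in each case: orthogonality $\varphi^\lor=\varphi$ forces $-G=G^\lor$ and $F=F^\lor$, while symplecticity $\varphi^\lor=-\varphi$ forces $G=G^\lor$ and $F=-F^\lor$. Writing out
\[
F^\lor=\begin{pmatrix} 0 & -G^\lor & 0 \\ G^\lor & 0 & 0 \\ 0 & 0 & H^\lor \end{pmatrix}
\]
from the block expression of $F$, the off-diagonal blocks of $F^\lor=\pm F$ merely reproduce the condition already found on $G$, while the lower-right block enforces $H^\lor=H$ in the orthogonal case and $H^\lor=-H$ in the symplectic case. Running the implications in reverse, the conditions on $G$ and $H$ conversely force $F^\lor=\pm F$ and hence $\varphi^\lor=\pm\varphi$, yielding both directions.

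The main subtlety to be careful about is the sign bookkeeping in the dualisation: the two outer arrows $-G,G$ of diagram (\ref{duality diagram}) swap places under dualisation and thereby produce $(G^\lor,-G^\lor)$ rather than $(-G^\lor,G^\lor)$. This sign is what matches symplectic instantons with $G$ symmetric (and not antisymmetric) and orthogonal instantons with $G$ antisymmetric; getting the index reversal right is the only nontrivial point in the argument.
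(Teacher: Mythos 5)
Your proof is correct and follows essentially the same route as the paper: both dualise the monad morphism lifting $\varphi$, observe that the outer arrows $(-G,G)$ swap to $(G^\lor,-G^\lor)$, and read off the conditions on $G$ and $H$ by comparing with $\pm$ the original lift. Your write-up is if anything slightly more explicit about the block computation of $F^\lor$ and the converse direction, which the paper leaves implicit.
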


\begin{proof}
By dualising the map $\varphi\colon\EE\rightarrow\EE^\lor$ one gets the following picture
\[
\begin{xy}
 \xymatrix{
 V\otimes \OO_{\PP}(-1) \ar[r]^{\alpha}\ar[d]^{G^\lor} & \widetilde{W}\otimes\OO_{\PP} \ar[r]^{\beta}\ar[d]^{F^\lor} & V\otimes \OO_{\PP}(1) \ar[d]^{-G^\lor} \\
 V^\lor\otimes \OO_{\PP}(-1) \ar[r]^{\beta^\lor} & \widetilde{W}^\lor\otimes\OO_{\PP} \ar[r]^{\alpha^\lor} & V^\lor\otimes \OO_{\PP}(1),
 }
\end{xy}
\]
i.e. the bundle map $\varphi^\lor\colon\EE\rightarrow\EE^\lor$ is given by the isomorphisms $-G^\lor\colon V\rightarrow V^\lor$ and $H^\lor\colon W\rightarrow W^\lor$.
Now $\varphi^\lor=-\varphi$ is equivalent to $G^\lor=G$ and $H^\lor=-H$. This gives the statement in the symplectic case. The orthogonal case is completely analogous. 
\end{proof}

The case of rank $2$ instanton bundles will play an important role in this paper, so let us take a closer look in this case. 

If $\rk(\EE)=2$, then $\bigwedge^2 \EE\simeq \OO_{\PP^n}$ and there is a natural isomorphism $\EE\simeq\EE^\lor$. It follows that the bundle map $\varphi\colon\mathcal{E}\rightarrow\mathcal{E}^\lor$ can be regarded as a section in $\Hom(\EE,\EE) \simeq H^0 (\EE^\lor \otimes \EE)$; then a symplectic structure on $\EE$ becomes a section in $H^0(\bigwedge^2 \EE)$, while an orthogonal structure on $\EE$ becomes a section in $H^0(S^2(\EE))$.

Note also that every rank $2$ instanton bundle is autodual and admits a symplectic structure, which is unique up to scaling. On the other hand, orthogonal structures on rank $2$ instanton bundles may not exist. Indeed, in this situation we get
$$
\Hom(\EE,\EE) \simeq H^0(\bigwedge^2 \EE) \oplus H^0(S^2(\EE)),
$$
therefore a rank $2$ instanton bundle is simple (hence stable) if and only if it admits no orthogonal structure. 

Since every instanton bundle of rank $2$ on $\PP^3$ is stable, one concludes that there are no orthogonal instanton bundles of rank $2$ on $\PP^3$, a particular case of the Farnik--Frapporti--Marchesi non-existence result mentioned at the Introduction.

If $\EE$ is not simple, it may admit, depending on $h^0(S^2(\EE))$, distinct (up to scaling) orthogonal structures. Furthermore, $\EE$ may also admit autodual structures which are neither symplectic, nor orthogonal: just consider pairs $(\EE,\varphi)$ such that $\varphi = (\varphi_1,\varphi_2) \in H^0(\bigwedge^2 \EE) \oplus H^0(S^2(\EE))$, with $\varphi_1 \neq 0$ and $\varphi_2 \neq 0$.


\section{Symplectic Bundles}\label{sym}

In this section, we will describe the moduli space of framed symplectic instanton bundles in terms of \emph{ADHM} data.

Being a special case of autodual instanton bundles, we know from Proposition \ref{prop-modaut}, that a framed symplectic instanton bundle gives rise to an extended \emph{ADHM} datum $(A,B,I,G,H)$, where $H$ satisfies diagram (\ref{H-Phi}). 

From Lemma \ref{lem-dual}, we obtain that $H$ is an antisymmetric isomorphism and $G$ is a symmetric one. Moreover, we consider the defined group action
\[ (g.G)^\lor=((g^\lor)^{-1}Gg^{-1})^\lor=(g^\lor)^{-1}Gg^{-1}=g.G \]
for any $g\in GL(V)$ and we see that the symmetry of $G$ is not affected by it. 

            
Let us take a closer look at the duality relations.
Since we are in the symplectic case, by Lemma (\ref{lem-dual}) $G$ is a symmetric isomorphism whereas $H$ is antisymmetric, and thus $GIH^{-1}+G^\lor I(H^\lor)^{-1}=GIH^{-1}-GIH^{-1}=0$ holds for all given $I$.
            
\begin{prop}
 For fixed $r>0$ and $c>0$ we have a bijection
 \[ \mathcal{F}_{\PP^n}^{s}(r,c) \cong \left\{ (A,B,I,G,H) \right\}/_{GL(V)} \]
 where $(A,B,I,G,H)$ satisfies the following:
 \begin{itemize}
  \item $(A,B,I,-H^{-1}I^\lor G)$ is a regular ADHM datum.
  \item $GA=A^\lor G$, $GB=B^\lor G$.
  \item $G^\lor =G$, $H^\lor=-H$.
 \end{itemize}
\end{prop}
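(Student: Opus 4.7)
The plan is to deduce this proposition directly from Proposition \ref{prop-modaut} by imposing the symplectic condition $\varphi^\lor = -\varphi$ and then simplifying the resulting list of relations using Lemma \ref{lem-dual}. In other words, the whole statement should fall out as a specialization of the autodual case, with one of the three duality relations becoming redundant.

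First, given a framed symplectic instanton bundle $(\EE,\Phi,\varphi)$, I would invoke Proposition \ref{prop-modaut} to produce extended \emph{ADHM} data $(A,B,I,G,H)$ with $J=-H^{-1}I^\lor G$, satisfying regularity together with $GA=A^\lor G$, $GB=B^\lor G$ and $GIH^{-1}+G^\lor I(H^\lor)^{-1}=0$. Then I would apply Lemma \ref{lem-dual}(1): the hypothesis $\varphi^\lor=-\varphi$ is equivalent to $G^\lor=G$ and $H^\lor=-H$, so these get added to the list of constraints. At this point the key observation, already anticipated in the paragraph preceding the proposition, is that under $G^\lor=G$ and $H^\lor=-H$ the third duality relation becomes
\[
GIH^{-1}+G^\lor I(H^\lor)^{-1}=GIH^{-1}-GIH^{-1}=0,
\]
so it is tautologically satisfied and can be dropped, leaving exactly the three bullet conditions in the statement.

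Next I would verify that the $GL(V)$-action on the extended data preserves these new symmetry/antisymmetry conditions, so that passing to orbits still makes sense. Since the action fixes $H$, antisymmetry of $H$ is trivially preserved, and since $g.G=(g^\lor)^{-1}Gg^{-1}$, one computes $(g.G)^\lor=(g^\lor)^{-1}G^\lor g^{-1}=g.G$, so symmetry of $G$ is preserved as well. Combined with the $GL(V)$-invariance already established in Proposition \ref{prop-modaut}, this shows that the set of symplectic data modulo $GL(V)$ sits inside the autodual moduli bijectively.

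Finally, for the converse direction, starting from data $(A,B,I,G,H)$ as in the statement I would first append $J=-H^{-1}I^\lor G$ and note that the (now automatic) third duality relation still holds, so Proposition \ref{prop-modaut} hands back a framed autodual instanton bundle $(\EE,\Phi,\varphi)$; Lemma \ref{lem-dual}(1) then upgrades $\varphi$ to a symplectic form. Because both directions are given by the same underlying correspondence from Proposition \ref{prop-modaut}, they are mutually inverse on $GL(V)$-orbits. There is no real obstacle here: the entire argument is a bookkeeping exercise on top of the autodual case, and the only point that requires attention is the redundancy check for the third duality relation, which is what allows us to remove it from the final list.
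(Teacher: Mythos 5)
Your proposal is correct and follows essentially the same route as the paper: the paper likewise derives this proposition by specializing Proposition \ref{prop-modaut} via Lemma \ref{lem-dual}, observing that the relation $GIH^{-1}+G^\lor I(H^\lor)^{-1}=0$ becomes automatic when $G^\lor=G$ and $H^\lor=-H$, and checking that the $GL(V)$-action preserves the symmetry of $G$. The paper presents this as a short discussion preceding the proposition rather than a formal proof, but the content is identical to yours.
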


Let us take a closer look at the framing $\Phi\colon\EE_{|l}\rightarrow W\otimes\OO_{l}$.
For this, we consider an action on $H$, which by construction is related to the framing of $\EE$. In order to do so, we consider elements $h$ of the symplectic group $Sp(W)$ and we define an action
\[ h.(I,H) := (Ih^{-1},(h^\lor)^{-1}Hh^{-1}). \]
It is known that for any symplectic pair $(W,H)$, given by a vector space and a symplectic bilinear form, we can always get, through the action defined before, that 
\[ H = \left( \begin{matrix}
                 0 & \mathbf{1} \\
                 -\mathbf{1} & 0 
               \end{matrix}\right) =:\Omega \]
which is known as the standard symplectic structure. Hence, if we consider bundles which are of \emph{trivial splitting type} (i.e. no framing is chosen) we can always assume to have $H$ in standard form. Recall that $\mathcal{M}_{\PP^n}^{s}(r,c)$ denotes the moduli space of symplectic instanton bundles of trivial splitting type, we hence obtain the following description.

\begin{prop}
For fixed $r>0$ and $c>0$ we have a bijection
 \[ 
 \mathcal{M}_{\PP^n}^{s}(r,c) \cong \left\{ (A,B,I,G) \right\}/_{GL(V)\times Sp(W)}
\]
where $(A,B,I,G)$ satisfies the following:
\begin{itemize}
  \item $(A,B,I,-\Omega^{-1}I^\lor G)$ is a regular ADHM datum.
  \item $GA=A^\lor G$, $GB=B^\lor G$.
  \item $G^\lor =G$.
\end{itemize}
\end{prop}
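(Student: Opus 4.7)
The plan is to deduce this from the description of $\mathcal{F}^s_{\PP^n}(r,c)$ in the previous Proposition by quotienting by the $GL(W)$-action that forgets the framing, and then normalising $H$ to the standard form $\Omega$.

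First I would extend the $GL(V)$-action to a commuting action of $GL(W)$ coming from change of framing: an element $h\in GL(W)$ sends $(A,B,I,J,G,H)$ to $(A,B,Ih^{-1},hJ,G,(h^\lor)^{-1}Hh^{-1})$. A direct check shows that this preserves the ADHM equation, the regularity condition, and the duality relations of Lemma \ref{lem mor}, and that it commutes with the $GL(V)$-action on $\mu^{-1}(0)$; moreover $H$ remains antisymmetric. Since forgetting the framing amounts precisely to quotienting by $GL(W)$, the previous Proposition identifies $\mathcal{M}^s_{\PP^n}(r,c)$ with the set of tuples $(A,B,I,G,H)$ satisfying the listed conditions, modulo $GL(V)\times GL(W)$.

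The conceptually central step is then to invoke the standard fact that any nondegenerate antisymmetric bilinear form on $W$ is $GL(W)$-congruent to $\Omega$; in particular this forces $r$ to be even. Hence every $GL(W)$-orbit admits a representative with $H=\Omega$, and the residual freedom among such representatives is exactly the stabiliser of $\Omega$, which by definition is $Sp(W)$. This produces the desired bijection with $\{(A,B,I,G)\}/(GL(V)\times Sp(W))$, the $Sp(W)$-action being simply $h.(A,B,I,G)=(A,B,Ih^{-1},G)$.

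Finally, I would match the two lists of conditions. The regularity of $(A,B,I,-\Omega^{-1}I^\lor G)$, the symmetry $G^\lor=G$ and the relations $GA=A^\lor G$, $GB=B^\lor G$ are inherited unchanged from the previous Proposition after the substitution $H=\Omega$; the condition $GIH^{-1}+G^\lor I(H^\lor)^{-1}=0$ drops out of the list because it is automatically satisfied once $G$ is symmetric and $H$ is antisymmetric, exactly as remarked in the paragraph preceding the statement. The only genuinely nontrivial ingredient is the Darboux-type normalisation of $H$; the remaining verifications reduce to the identity $(h^\lor)^{-1}\Omega h^{-1}=\Omega$ for $h\in Sp(W)$ and are routine bookkeeping.
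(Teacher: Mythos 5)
Your proposal is correct and follows essentially the same route as the paper: the paper likewise passes from the framed description by letting a change-of-framing act on $(I,H)$, normalises the antisymmetric form $H$ to the standard $\Omega$, and keeps the residual $Sp(W)$-symmetry, with the extra duality relation dropping out automatically since $G$ is symmetric and $H$ antisymmetric. Your version is in fact slightly more careful than the paper's (which acts directly by $Sp(W)$ rather than first quotienting by $GL(W)$ and then identifying the stabiliser of $\Omega$), but the argument is the same.
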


\begin{rem}
Certain moduli spaces of symplectic instantons have been much studied. The simplest case $\mathcal{M}_{\PP^2}^{s}(r,c)$, proved in \cite[Theorem 7.7]{ott} to be a nonsingular, irreducible variety of dimension $(r+2)c-{{r+1}\choose{2}}$, whenever non-empty. Another classical situation is
$\mathcal{M}_{\PP^3}^{s}(2,c)$, which, after recent results by Jardim--Verbitsky and Tikhomirov \cite{JV,Tik1,Tik2} is know to be a nonsingular, irreducible variety of dimension $8c-3$. The case $\mathcal{M}_{\PP^3}^{s}(r,c)$ for even $r$ is studied in \cite{BMT}. In higher dimension, 
$\mathcal{M}_{\PP^{2k+1}}^{s}(2k,c)$ is addressed in \cite{CO2,OS}.
\end{rem}


\section{Orthogonal Bundles}\label{ort}

In this Section, we will describe the moduli space in the orthogonal case, concluding with examples of rank $2$ orthogonal instantons bundles on $\PP^2$ and rank $4$ orthogonal instantons bundles on $\PP^3$.

Orthogonal instanton bundles have always atracted special attention, as it was very hard to find any examples. That was because in \cite{orth} the authors prove that orthogonal instanton bundles of rank $2n$ on $\PP^{2n+1}$, which represent the most studied case of instanton bundles, do not exist. We will extend this result in various directions. 

Similarly to the symplectic case, an orthogonal instanton bundle gives an extended \emph{ADHM} datum $(A,B,I,G,H)$ with $H\colon W\rightarrow W^\lor$ symmetric and $G\colon V\rightarrow V^\lor$ antisymmetric. Again, the antisymmetry of $G$ is not affected by the action of $Gl(V)$. The duality relation is again fulfilled for all $I$, since
\[ GIH^{-1}+G^\lor I(H^\lor)^{-1}= GIH^{-1}-GIH^{-1}=0. \] 


\begin{prop}
 For fixed $r>0$ and $c>0$ we have a bijection
 \[ \mathcal{F}_{\PP^n}^{o}(r,c) \cong \left\{ (A,B,I,G,H) \right\}/_{GL(V)} \]
 where $(A,B,I,G,H)$ satisfies the following:
 \begin{itemize}
  \item $(A,B,I,-H^{-1}I^\lor G)$ is a regular ADHM datum.
  \item $GA=A^\lor G$, $GB=B^\lor G$.
  \item $G^\lor=-G$, $H^\lor=H$.
 \end{itemize}
\end{prop}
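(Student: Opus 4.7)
The approach is to deduce the orthogonal statement as a direct specialization of Proposition~\ref{prop-modaut}, layering on top of it the orthogonality criterion from Lemma~\ref{lem-dual}(2), in complete parallel with the symplectic case just treated.

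First, I would start from the bijection of Proposition~\ref{prop-modaut}, which already parametrizes framed autodual instantons by $GL(V)$-orbits of data $(A,B,I,G,H)$ subject to the regularity of $(A,B,I,-H^{-1}I^\lor G)$, the relations $GA=A^\lor G$ and $GB=B^\lor G$, and the duality identity $GIH^{-1}+G^\lor I(H^\lor)^{-1}=0$. Restricting to orthogonal structures means, by Lemma~\ref{lem-dual}(2), imposing the additional conditions $G^\lor=-G$ and $H^\lor=H$.

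Second, I would observe that under these symmetries the third duality relation becomes automatic: a direct substitution gives
\[
GIH^{-1}+G^\lor I(H^\lor)^{-1} \;=\; GIH^{-1}-GIH^{-1} \;=\; 0,
\]
valid for every $I$. That equation may therefore be dropped, leaving precisely the three bullets in the statement. I would then confirm compatibility with the group action: for $g\in GL(V)$,
\[
(g.G)^\lor = \bigl((g^\lor)^{-1}Gg^{-1}\bigr)^\lor = (g^\lor)^{-1}G^\lor g^{-1} = -(g^\lor)^{-1}Gg^{-1} = -g.G,
\]
so antisymmetry of $G$ is preserved, while $H$ is untouched by the $GL(V)$-action and its symmetry is trivial.

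Conversely, any datum $(A,B,I,G,H)$ satisfying the three bullets satisfies in particular all hypotheses of Proposition~\ref{prop-modaut} (the missing third duality relation being restored by the same cancellation), hence produces a framed autodual bundle; Lemma~\ref{lem-dual}(2) then identifies its autodual structure as orthogonal. I expect no genuine obstacle: the entire argument is a formal specialization, the only noteworthy point being the cancellation that makes the third duality relation vacuous under $G^\lor=-G$ and $H^\lor=H$, which mirrors the analogous cancellation in the symplectic case with the roles of symmetry and antisymmetry swapped.
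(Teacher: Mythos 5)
Your proposal is correct and follows essentially the same route as the paper: the authors likewise obtain this proposition by specializing Proposition \ref{prop-modaut} via Lemma \ref{lem-dual}(2), observing that the third duality relation $GIH^{-1}+G^\lor I(H^\lor)^{-1}=0$ cancels automatically when $G^\lor=-G$ and $H^\lor=H$, and that the $GL(V)$-action preserves the antisymmetry of $G$. No gaps.
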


Distinctly from the symplectic case, one cannot bring an orthogonal structure $H$ vector space $(W,H)$ into a standard form. Let the orthogonal group $O(W)$ act on the pair $(I,H)$ as 
\[ h.(I,H) := (Ih^{-1},(h^\lor)^{-1}Hh^{-1}). \]
Forgetting the framing of a framed orthogonal bundle amounts to quotienting $\mathcal{F}_{\PP^n}^{o}(r,c)$ by this action of $O(W)$, thus we obtain the following result. Again, recall that $\mathcal{M}_{\PP^n}^{o}(r,c)$ denotes the moduli space of orthogonal instanton bundles of trivial splitting type.

\begin{prop}
For fixed $r>0$ and $c>0$ we have a bijection
\[  \mathcal{M}_{\PP^n}^{o}(r,c) \cong \left\{ (A,B,I,G,H) \right\}/_{GL(V)\times O(W)} \]
where $(A,B,I,G,H)$ satisfies the following:
\begin{itemize}
  \item $(A,B,I,-H^{-1}I^\lor G)$ is a regular ADHM datum.
  \item $GA=A^\lor G$, $GB=B^\lor G$.
  \item $G^\lor=-G$, $H^\lor=H$.
\end{itemize}
\end{prop}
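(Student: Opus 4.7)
The plan is to derive this bijection from the immediately preceding proposition on framed orthogonal instantons by dividing out the action on the extended data that corresponds to forgetting the framing. Concretely, I would introduce the forgetful map $\pi\colon\mathcal{F}^o_{\PP^n}(r,c)\to\mathcal{M}^o_{\PP^n}(r,c)$ given by $[(\EE,\Phi,\varphi)]\mapsto[(\EE,\varphi)]$. Surjectivity is immediate from the definition of trivial splitting type: every $(\EE,\varphi)\in\mathcal{M}^o_{\PP^n}(r,c)$ admits at least one framing $\Phi$ at the line $l$. The content is then to identify the fibres of $\pi$ with orbits of a group action.

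Next I would analyse those fibres. Two framings $\Phi_1,\Phi_2\colon\EE_{|l}\to W\otimes\OO_l$ of the same bundle differ by a unique automorphism $h=\Phi_2\circ\Phi_1^{-1}\in GL(W)$. Imposing compatibility with the orthogonal structure via the diagram (\ref{H-Phi}) shows that changing the framing by $h$ forces the induced symmetric form to transform as $H\mapsto(h^\lor)^{-1}Hh^{-1}$, and the datum $I$ (the only component carrying a $W$-index on the $\widetilde W\to V(1)$ side) to transform as $I\mapsto Ih^{-1}$. The remaining data $A,B,G$ are determined purely by the bundle $\EE$ and its dualisation, and are untouched. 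This is exactly the $O(W)$-action on the pair $(I,H)$ defined in the paragraph preceding the statement.

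I would then verify compatibility of this action with the conditions in the proposition: regularity of $(A,B,I,-H^{-1}I^\lor G)$, the intertwining relations $GA=A^\lor G$ and $GB=B^\lor G$, antisymmetry of $G$, and symmetry of $H$ are all preserved because they are intrinsic to the monad/duality picture and invariant under base change on $W$. Since the $GL(V)$-action only involves the $V$-indices and the $O(W)$-action only the $W$-indices, the two commute and combine to a well-defined $GL(V)\times O(W)$-action on the set of extended data. Passing to the quotient of the framed bijection then yields the claimed bijection $\mathcal{M}^o_{\PP^n}(r,c)\cong\{(A,B,I,G,H)\}/GL(V)\times O(W)$.

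The main obstacle is justifying that the correct group to quotient by is $O(W)$ rather than the full change-of-framing group $GL(W)$. In the symplectic case one circumvents this by first using $GL(W)$ to rigidify $H$ into the standard form $\Omega$, after which only the stabiliser $Sp(W)$ survives; in the orthogonal setting one instead retains $H$ as part of the data, and a careful reading of (\ref{H-Phi}) is needed to see that only the $O(W)$-worth of framing changes is compatible with a given symmetric form $H$. Once this is in place, the remainder of the proof is a routine bookkeeping of group actions and verification of invariance of the duality relations established in Lemma \ref{lem mor}.
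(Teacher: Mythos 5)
Your proposal follows the same route as the paper, which in fact offers no more than a single sentence of justification: it defines the action $h.(I,H)=(Ih^{-1},(h^\lor)^{-1}Hh^{-1})$ and asserts that forgetting the framing amounts to quotienting $\mathcal{F}_{\PP^n}^{o}(r,c)$ by $O(W)$ acting this way. The one point you rightly single out as delicate --- why the residual group is $O(W)$ rather than the full change-of-framing group $GL(W)$ --- is precisely the point the paper leaves unargued, so your sketch is, if anything, more explicit than the printed proof.
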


\begin{rem}
Recently, Abuaf and Boralevi proved that $\mathcal{M}_{\PP^2}^{o}(r,c)$ is an irreducible, nonsingular variety of dimension $(r-2)c-{{r}\choose{2}}$ for $r=c$ and $c\ge4$, and for $r=c-1$ and $c\ge8$, whenever non-empty, cf. \cite[Theorem 3.4]{AB}. To our knowledge, $\mathcal{M}_{\PP^n}^{o}(r,c)$ has not been previously studied for $n\ge3$. 
\end{rem}

While symplectic instanton bundles have been studied for many authors, the existence of orthogonal instanton bundles is less explored. Therefore, let us start by providing an easy, explicit example of an orthogonal instantons bundles of rank $2n$ and charge $1$ on $\PP^n$. Indeed, consider the monad
\[ \begin{xy}
 \xymatrix{
  \OO_{\PP^n}(-1) \ar[r]^{\alpha^t} & \OO^{2n+2}_{\PP^n} \ar[r]^{\alpha} & \OO_{\PP^n}(1)
  }
\end{xy} \]
where $[x_0:\ldots:x_n]$ are homogeneous coordinates on $\PP^n$ and 
\[ \alpha=(x_0,\ldots,x_n,i\cdot x_0,\ldots,i\cdot x_n). \]
Clearly $\alpha\alpha^t=0$ and $\alpha^t$ is injective in every point so that its cohomology bundle, denoted by $\EE$, is an orthogonal instanton bundle of $\rk(\EE)=2n$ and charge $1$.

Note that, by \cite[Corollary 4.1]{JdS}, $\EE$ is decomposable. Indeed, in light of \cite[Proposition 4.3]{JdS}, one can check that the monad above splits as a sum of two monads of the form
\[ \begin{xy}
 \xymatrix{
  \OO_{\PP^n}(-1) \ar[r]^{\tau^t} & \OO^{n+1}_{\PP^n} \ar[r]^{0} & 0
  }
\end{xy} \] 
and
\[ \begin{xy}
 \xymatrix{
  0 \ar[r]^{0} & \OO^{n+1}_{\PP^n} \ar[r]^{i\cdot\tau} & \OO_{\PP^n}(1)
  }
\end{xy} \]
where $\tau=\alpha=(x_0,\ldots,x_n)$. It follows that $\EE\simeq T_{\PP^n}(-1)\oplus\Omega_{\PP^n}(1)$, where $ T_{\PP^n}$ and $\Omega_{\PP^n}(1)$ denote the tangent and cotangent bundles of $\PP^n$, respectively.

It also follows that $\EE$ is not of trivial splitting type, thus it cannot be obtained via the \emph{ADHM} construction. Alternatively, note that if $l=\left\langle xy\right\rangle$ is the line through $x,y\in\PP^{n}$, then
 \[ \det(\alpha_l(x)\alpha_l(y)^t) = \sum_{i=0}^n x_iy_i+i^2x_iy_i = 0 , \]
thus the restriction $\EE|_{l}$ is not trivial (cf. \cite[Lemma II.4.2.3 and Remark II.4.2.4]{oss}.

In fact, one can show the following general non-existence result.

\begin{lemma}\label{no-c-odd}
If $\EE$ is an orthogonal instanton bundle of trivial splitting type, then $c=c_2(\EE)$ is even. In particular, $\mathcal{M}_{\PP^n}^{o}(r,c)$ is empty for $c$ odd.
\end{lemma}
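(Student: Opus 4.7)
The plan is to exploit the structural constraint that the ADHM datum attached to an orthogonal instanton bundle carries an \emph{antisymmetric isomorphism} $G\colon V\to V^\vee$, and then invoke the classical fact that a vector space admitting such a form must be even-dimensional.

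More precisely, first I would observe that since $\EE$ has trivial splitting type, we can pick a framing $\Phi$ at some line $l\subset\PP^n$, turning $(\EE,\Phi,\varphi)$ into a framed orthogonal instanton bundle. By the orthogonal-case proposition established earlier in Section \ref{ort} (together with Lemma \ref{lem mor} and Lemma \ref{lem-dual}), this produces extended ADHM data $(A,B,I,J,G,H)$, where $V$ is a vector space of dimension $c=\dim V=c_2(\EE)$, and where $G\colon V\to V^\vee$ is an \emph{isomorphism} satisfying $G^\vee=-G$.

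The key step is then purely linear-algebraic: view $G$ as a matrix in a chosen basis of $V$ and its dual basis of $V^\vee$, so that $G^\vee=-G$ becomes $G^{T}=-G$. Taking determinants gives
\[
\det(G)=\det(G^{T})=\det(-G)=(-1)^{c}\det(G).
\]
Since $G$ is an isomorphism, $\det(G)\neq 0$, which forces $(-1)^c=1$, i.e.\ $c$ is even. The second assertion is immediate: if $c$ were odd, no such datum could exist, so $\mathcal{M}_{\PP^n}^{o}(r,c)$ would be empty.

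There is essentially no obstacle here; the only subtlety is making sure that the datum $G$ really is an isomorphism and really is antisymmetric in the sense needed. Both follow from Lemma \ref{lem mor} (which guarantees that $G$ is an isomorphism because it lifts the isomorphism $\varphi\colon\EE\to\EE^\vee$) and Lemma \ref{lem-dual}(2) (which identifies antisymmetry of $G$ with the orthogonality $\varphi^\vee=\varphi$). Note that the hypothesis of trivial splitting type is crucial, since without it the ADHM construction is not available; this also explains why the charge-$1$ example of $T_{\PP^n}(-1)\oplus\Omega_{\PP^n}(1)$ exhibited above is not a counterexample, as it was shown there not to be of trivial splitting type.
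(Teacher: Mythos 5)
Your proposal is correct and follows exactly the paper's argument: the extended ADHM datum of an orthogonal instanton of trivial splitting type carries an antisymmetric isomorphism $G\colon V\to V^\vee$, which forces $\dim V=c$ to be even. You simply spell out the determinant computation that the paper leaves implicit, so there is nothing to add.
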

\begin{proof}
 The \emph{ADHM} datum of $\EE$ gives an antisymmetric isomorphism $G\colon V\rightarrow V^\lor$, forcing $\dim V=c$ to be even.
\end{proof}

\begin{rem}
An alternative proof for the case $n=2$ is given in \cite[Proposition 3.1]{AB}.
\end{rem}


\subsection{Framed orthogonal instanton bundles on $\PP^2$}

In light of Lemma \ref{no-c-odd}, the simplest possible situation in which framed orthogonal instanton bundles may exist is for rank $2$ and charge $2$ on $\PP^2$. Our next result says that these do not exist either.

\begin{thm}
Framed orthogonal instanton bundles of rank $2$ and charge $2$ on $\PP^2$ do not exist. In other words, $\mathcal{M}_{\PP^2}^{o}(2,2)=\emptyset$
\end{thm}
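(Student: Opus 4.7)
The plan is to unpack the description of $\mathcal{F}_{\PP^2}^o(2,2)$ from the preceding proposition and show that, with $r=c=2$, the combined conditions on $(A,B,I,G,H)$ are inconsistent. Here $n=2$, so the ADHM data $(A,B,I,J)$ are single matrices (one coordinate $z_0$), with $A,B\in\End(V)$ and $I,J$ between the two-dimensional spaces $V$ and $W$.

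First I would exploit the antisymmetry of $G\colon V\to V^\vee$. Since antisymmetric bilinear forms on a two-dimensional space form a one-dimensional space, $G$ is a nonzero scalar multiple of the standard skew form, and up to the $GL(V)$-action I may assume $G$ equals that standard form $\Omega_0$. The duality relations $GA=A^\vee G$ and $GB=B^\vee G$ are equivalent to the $2\times 2$ matrices $GA$ and $GB$ being antisymmetric, since $(GA)^\vee=A^\vee G^\vee=-A^\vee G=-GA$. But the space of antisymmetric $2\times 2$ matrices is spanned by $G$ itself, so this forces $A=a\mathbf{1}$ and $B=b\mathbf{1}$ to be scalar matrices for some $a,b\in\KK$.

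This scalar rigidity is the heart of the argument. With $A,B$ scalars we have $[A,B]=0$, so the ADHM equation collapses to $IJ=0$, where $J=-H^{-1}I^\vee G$ as dictated by the duality relation. Next I would invoke global regularity at the explicit point $p_0=[-a:-b:1]\in\PP^2$, at which the first two blocks of $\alpha_{p_0}$ vanish, leaving
\[ \alpha_{p_0} = \begin{pmatrix} 0 \\ 0 \\ J \end{pmatrix}. \]
Injectivity of $\alpha_{p_0}$ forces the $2\times 2$ matrix $J$ to have rank $2$, hence to be invertible. Combined with $IJ=0$ this yields $I=0$, and then $J=-H^{-1}I^\vee G=0$, contradicting the invertibility of $J$. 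Hence $\mathcal{F}_{\PP^2}^o(2,2)=\emptyset$, and the statement for the unframed space follows because the forgetful map $\mathcal{F}_{\PP^2}^o(2,2)\to\mathcal{M}_{\PP^2}^o(2,2)$ is surjective.

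The proof is not seriously obstructed; the decisive observation is that the antisymmetry of $G$ on a two-dimensional space collapses $A$ and $B$ to scalar matrices, after which evaluating at the single point $p_0$ suffices. The only bookkeeping I would double-check is that the $GL(V)$-action can really normalize $G$ to its standard form, which is routine since any nondegenerate antisymmetric form on a two-dimensional space is equivalent to the standard one.
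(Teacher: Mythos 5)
Your proposal is correct and follows essentially the same route as the paper: normalize the nondegenerate antisymmetric $G$ to the standard skew form, use $GA=A^\vee G$ and $GB=B^\vee G$ to force $A$ and $B$ to be scalar matrices, reduce the ADHM equation to $IJ=0$, and evaluate the monad maps at the point where the first two blocks vanish to contradict regularity. The only (immaterial) difference is the last step: the paper bounds $\rk(I)=\rk(J)\leq 1$ from $IJ=0$ and $GI=J^\vee H$ and concludes $\alpha_p$ is never injective, while you deduce $J$ invertible from injectivity at $p_0$ and then derive $I=0$, hence $J=0$ --- the same contradiction.
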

\begin{proof}
In this case, all maps in the extended \emph{ADHM} datum $(A,B,I,J,G,H)$ are given by $(2\times 2)$-matrices. In order to construct an orthogonal instanton, we need $G^\lor=-G$. This means that the pair $(V,G)$ gives a symplectic vector space and hence we can choose a basis such that
 \[ G=\begin{pmatrix}
       0 & 1 \\
       -1 & 0
      \end{pmatrix}.
 \] 
 Setting 
 $A=\begin{pmatrix}
             a_1 & a_2 \\
             a_3 & a_4
            \end{pmatrix}, $
we compute the duality relation $GA=A^\lor G$ and get
\begin{align*}
 \begin{pmatrix}
       a_3 & a_4 \\
       -a_1 & -a_2
      \end{pmatrix}
 =GA=A^\lor G
 = \begin{pmatrix}
      a_1 & a_3 \\
      a_2 & a_4
     \end{pmatrix}
 \begin{pmatrix}
      0 & 1 \\
      -1 & 0
     \end{pmatrix}
 = \begin{pmatrix}
      -a_3 & a_1 \\
      -a_4 & a_2
     \end{pmatrix}, 
\end{align*}
     hence 
     $
     A=\begin{pmatrix}
          a & 0 \\
          0 & a
         \end{pmatrix}$
 and similarly $B=\begin{pmatrix}
                    b & 0 \\
                    0 & b
                   \end{pmatrix}.
$
Now $A$ and $B$ commute, so the \emph{ADHM} equation reduces to $IJ=0$. The maps $I$ and $J$ have the same rank since $GI=J^\lor H$, and combining this with the \emph{ADHM} equation we get that $\rk(I),\rk(J)\leq 1$.
 
 Choosing homogeneous coordinates $[z:x:y]$ on $\PP^2$ the monad maps take the form
 \[ \alpha=\begin{pmatrix}
            az+x & 0 \\
            0 & az+x \\
            bz+y & 0 \\
            0 & bz+y \\
            i_1z & i_2z \\
            i_3z & i_4z
           \end{pmatrix}, \hspace{.5cm}
     \beta=\begin{pmatrix}
            -bz-y & 0 & az+x & 0 & j_1z & j_2z \\
            0 & -bz-y & 0 & az+x & j_3z & j_4z
           \end{pmatrix},
 \]
 where $I=\begin{pmatrix}
           i_1 & i_2 \\
           i_3 & i_4
          \end{pmatrix}
$, $J=\begin{pmatrix}
       j_1 & j_2 \\
       j_3 & j_4
      \end{pmatrix}.
$ We let $p=[-1:a:b]\in\PP^2$ and get for the fibre maps 
\[ \alpha_p = \begin{pmatrix}
               0 \\
               0 \\
               -I
              \end{pmatrix},\hspace{.5cm}
    \beta_p = \begin{pmatrix}
               0 & 0 & J
              \end{pmatrix}.
 \]
 Since $\rk(I),\rk(J)\leq 1$, we see that $\alpha_p$ can never be injective and $\beta_p$ can never be surjective, which concludes the proof.
\end{proof}

\bigskip

We will now prove that it is possible to find rank $2$ orthogonal instanton bundles of charge $4$ on $\PP^2$, i.e. $\mathcal{M}_{\PP^2}^{o}(2,4) \neq\emptyset$, using the conditions used to describe their moduli space.

We know by now that this means considering the commutative diagram
$$
\begin{xy}
 \xymatrix{
 V\otimes \OO_{\PP}(-1) \ar[r]^{\alpha}\ar[d]^{-G} & \widetilde{W}\otimes\OO_{\PP} \ar[r]^{\beta}\ar[d]^{F} & V\otimes \OO_{\PP}(1) \ar[d]^{G} \\
 V^\lor\otimes \OO_{\PP}(-1) \ar[r]^{\beta^\lor} & \widetilde{W}^\lor\otimes\OO_{\PP} \ar[r]^{\alpha^\lor} & V^\lor\otimes \OO_{\PP}(1)
 }
\end{xy}
$$
with the following relations
\begin{equation*}
\begin{array}{c}
 GB = B^\lor G  \\
 GA = A^\lor G \\
 HJ = -I^\lor G \\
 GI = J^\lor H
 \end{array}
\end{equation*}
and moreover $H$ is symmetric and $G$ antisymmetric. Being $(V,G)$ a symplectic vector space we can fix a basis of $V$ such that
$$
G =
\left(
\begin{array}{cccc}
0 & 0 & 1 & 0 \\
0 & 0 & 0 & 1 \\
-1 & 0 & 0 & 0 \\
0 & -1 & 0 & 0
\end{array}
\right)
$$
Using the first two relations in (\ref{duality}), we obtain that $A$ and $B$ have the following form
$$
A =
\left(
\begin{array}{cccc}
a_1 & a_2 & 0 & a_4 \\
a_5 & a_6 & -a_4 & 0 \\
0 & a_{10} & a_1 & a_5 \\
-a_{10} & 0 & a_2 & a_6
\end{array}
\right)
$$
and
$$
B =
\left(
\begin{array}{cccc}
b_1 & b_2 & 0 & b_4 \\
b_5 & b_6 & -b_4 & 0 \\
0 & b_{10} & b_1 & b_5 \\
-b_{10} & 0 & b_2 & b_6
\end{array}
\right)
$$
In order to simplify computation, we fix $H$ to be the identity matrix and 
$$
J =
\left(
\begin{array}{cccc}
j_1 & 0 & 0 & 0 \\
0 & 0 & 0 & j_8 
\end{array}
\right)
$$
and recover $I$ from the remaining relations.

Let us compute the \emph{ADHM} equation (which must have all zero entries) with the obtained matrices, and we have a further matrix
$$
AB - BA + IJ = \left(
\begin{array}{cc}
 -a_5b_2+a_{10}b_4+a_2b_5-a_4b_{10}  & -a_2b_1+a_1b_2-a_6b_2+a_2b_6     \\
a_5b_1-a_1b_5+a_6b_5-a_5b_6    &  a_5b_2+a_{10}b_4-a_2b_5-a_4b_{10}   \\
2a_{10}b_5-2a_5b_{10}+j_1^2      &     -a_{10}b_1+a_{10}b_6+a_1b_{10}-a_6b_{10} \\
      -a_{10}b_1+a_{10}b_6+a_1b_{10}-a_6b_{10} & -2a_{10}b_2+2a_2b_{10}     \\
\end{array}
\right.
$$
$$
\left.
\begin{array}{cc}
2a_4b_2-2a_2b_4  &              -a_4b_1+a_1b_4-a_6b_4+a_4b_6 \\
-a_4b_1+a_1b_4-a_6b_4+a_4b_6  & 2a_5b_4-2a_4b_5-j_8^2 \\
  a_5b_2-a_{10}b_4-a_2b_5+a_4b_{10}  & -a_5b_1+a_1b_5-a_6b_5+a_5b_6\\
a_2b_1-a_1b_2+a_6b_2-a_2b_6 & -a_5b_2-a_{10}b_4+a_2b_5+a_4b_{10}   
      \end{array}
\right)
$$
In order to obtain the orthogonal bundle, we must find solutions in the $a$'s and $b$'s such that $j_1$ and $j_8$ are not zero and, to prove the existence of such a solution, we define the ideal $Y$ generated by all entries of the matrix without a summand in either $j_1$ or $j_8$. After that, we consider the ideal $X$ defined as
$$
X = \langle Y , a_{10} b_5-a_5 b_{10}, a_5 b_4-a_4 b_5 \rangle.
$$
It is possible to prove that $X$ and $Y$ are not the same ideal, which means that we can find solutions of the \emph{ADHM} equation with $j_1$ and $j_8$ different from zero.

In order to find the orthogonal instanton bundle we have to look for regular solutions of the \emph{ADHM} equation. Recall that this is equivalent to show that we have a solution such that there are no proper subspaces of the image of $I$ or the kernel of $J$ (that are both subspaces of $V$) which are not invariant through the linear applications $A$ and $B$.

To this end, note that 
\[
 I=G^{-1}J^\lor H=\begin{pmatrix}
               0 & 0 \\
               0 & -j_8 \\
               j_1 & 0 \\
               0 & 0
              \end{pmatrix},
\]
and hence $\im I = \langle (0 1 0 0)^t, (0 0 1 0)^t \rangle=\ker J$. For the regularity of $(A,B,I,J)$ we need that, for all subspaces $S\subset V=\mathbb{C}^4$ with $\im I\subset S \subset \ker J$, that $S$ is not $A,B$-invariant. And since
\[ 
 A\cdot\begin{pmatrix}
        0 \\
        x \\
        y \\
        0
       \end{pmatrix}=
       \begin{pmatrix}
        a_2x \\
        a_6x-a_4y \\
        a_{10}x+a_1y \\
        a_2y
       \end{pmatrix},
\]
it is sufficient to ask that both $a_2$ and $b_2$ are not zero. As before, consider the ideal 
$$
W = \langle Y, a_2, b_2 \rangle
$$
which is not equal to $Y$; therefore we manage to find the required solution.

All the steps described during this part can be proven using Macaulay2; the code we used is included in Appendix \ref{ex-p2}.

Finally, using the same algorithm as in the charge $4$ case, one can also show the existence of regular solutions in case $c=6$, although the computations become very large. We are confident that it also works for higher charges.

\subsection{Framed orthogonal instanton bundles on $\PP^3$}

As noted before, there are no orthogonal bundles of rank $2$ on $\PP^3$. The next possibility would occur in rank $4$ and charge $2$. Indeed, it is possible to adapt the previous algorithm in order to obtain examples of framed orthogonal instanton bundles in higher dimensional projective spaces; in this Section we will show that indecomposable orthogonal instanton bundles of rank $4$ and charge $2$ on $\PP^3$ do exist.

Our goal is to find regular solutions of the \emph{ADHM} equations which satisfy the orthogonal relations that we have already described. Notice that in this case we will have to consider a 1-dimensional \emph{ADHM} datum, i.e. we are looking for matrices $A_k, B_k \in \End(V)$, $I_k \in \Hom(W,V)$ and $J_k \in \Hom(V,W)$, for $k=1,2$.

As before, we observe that $(V,G)$ is a symplectic vector space, therefore we can fix

$$
G= 
\left(
\begin{array}{cc}
0 & 1 \\
-1 & 0
\end{array}
\right)
$$

In order to satisfy the orthogonal relations, we consider

$$
A_0 =  \left(
\begin{array}{cc}
a_1 & a_2 \\
a_3 & -a_1
\end{array}
\right)  \:\:\:\:\: A_1 = \left(
\begin{array}{cc}
a_5 & a_6 \\
a_7 & -a_5
\end{array}
\right)
$$

and

$$
B_0 =  \left(
\begin{array}{cc}
b_1 & b_2 \\
b_3 & -b_1
\end{array}
\right)  \:\:\:\:\: B_1 = \left(
\begin{array}{cc}
b_5 & b_6 \\
b_7 & -b_5
\end{array}
\right)
$$

and, to simplify computations as usual, we take

$$
J_0 =  \left(
\begin{array}{cc}
j_1 & 0 \\
0 & 0 \\
0 & 0 \\
0 & j_8
\end{array}
\right)  \:\:\:\:\: J_1 = \left(
\begin{array}{cc}
0 & 0 \\
j_9 & 0 \\
0 & j_{16} \\
0 & 0
\end{array}
\right)
$$

We ask for the equations

$$
A_0 B_0 - B_0 A_0 + I_0 J_0 = \left(
\begin{array}{cc}
b_3a_2-b_2a_3 & 2b_2a_1-2b_1a_2-j_8^2 \\
-2b_3a_1+2b_1a_3+j_1^2 & -b_3a_2+b_2a_3
\end{array}
\right)
$$

$$
A_1 B_1 - B_1 A_1 + I_1 J_1 = \left(
\begin{array}{cc}
 b_7a_6-b_6a_7 & 2b_6a_5-2b_5a_6-j_{16}^2 \\
-2b_7a_5+2b_5a_7+j_9^2 & -b_7a_6+b_6a_7
\end{array}
\right).
$$

$$\begin{array}{c}
A_0 B_1 - B_1 A_0 + B_0 A_1 - A_1 B_0 + I_0 J_1 + I_1 J_0 = \vspace{2mm}\\
=  \left(
\begin{array}{cc}
b_7a_2-b_6a_3-b_3a_6+b_2a_7 & 2b_6a_1-2b_5a_2-2b_2a_5+2b_1a_6 \\
-2b_7a_1+2b_5a_3+2b_3a_5-2b_1a_7 & -b_7a_2+b_6a_3+b_3a_6-b_2a_7
\end{array}
\right)
\end{array} \vspace{2mm}
$$

to vanish, which implies the vanishing of the \emph{ADHM} equation. Moreover we want the coefficients $j_1, j_8, j_9$ and $j_{16}$ not to be zero and the coefficients $a_3,a_7,b_3,b_7$ to be zero in order to get a regular solution. Hence we want the following relations
$$
(b_2 a_1-b_1 a_2, -b_3 a_1+b_1 a_3, b_6 a_5-b_5 a_6, -b_7a_5+b_5 a_7, a_2)
$$

not to belong to the ideal
$$
\begin{array}{rl}
X:= & \langle b_3 a_2-b_2 a_3,b_7 a_6-b_6 a_7, -b_7 a_1 + b_5 a_3 + b_3 a_5 - b_1 a_7, b_6 a_1-b_5 a_2 - b_2 a_5 + b_1a_6,  \\ &  b_7 a_2-b_6 a_3-b_3 a_6+b_2 a_7,a_3,a_7,b_3,b_7 \rangle.
\end{array}
$$

Finally, we also check that one can get solutions in which the coefficients $a_2,a_6,b_2,b_6$ are not zero. This means that there exists a solution with the matrices $A_0, A_1, B_0, B_1$ not of a diagonal form,
therefore the corresponding linear monad is indecomposable, and thus its cohomology bundle is the desired indecomposable orthogonal instanton bundle.

All the steps described during this part can be proven using Macaulay2; the code we used is included in Appendix \ref{ex-p3}.


\appendix 

\section{Macaulay2 code for orthogonal bundles on $\PP^2$}\label{ex-p2}

Below is the Macaulay2 code we used to establish the existence of rank $2$ orthogonal instanton bundles of charge $4$ on $\PP^2$.

\begin{verbatim}
R=QQ[a_0..a_59,b_0..b_59, h_1, h_2, h_3,j_1..j_8,i_1..i_8]
A=matrix{{a_1,a_2,0,a_4},{a_5,a_1,-a_4,0},{0,a_10,a_1,a_5},{-a_10,0,a_2,a_1}}
B=matrix{{b_1,b_2,0,b_4},{b_5,b_1,-b_4,0},{0,b_10,b_1,b_5},{-b_10,0,b_2,b_1}}
G=matrix{{0,0,1,0},{0,0,0,1},{-1,0,0,0},{0,-1,0,0}}
iG=inverse G
J=matrix{{j_1,0,0,0},{0,0,0,j_8}}
TJ=transpose J
H=matrix{{1,0},{0,1}}
I=iG*TJ*H
M=A*B-B*A + I*J
Q=ideal{M}
mingens Q
Y=ideal(a_4*b_2-a_2*b_4, a_10*b_2-a_2*b_10, a_5*b_2-a_2*b_5, 
-a_4*b_10+a_10*b_4,  a_10*b_1-a_10*b_6-a_1*b_10+a_6*b_10,
a_5*b_1-a_1*b_5+a_6*b_5-a_5*b_6, a_4*b_1-a_1*b_4+a_6*b_4-a_4*b_6,
a_2*b_1-a_1*b_2+a_6*b_2-a_2*b_6 )
mingens Y
X=ideal(Y, a_10*b_5-a_5*b_10, a_5*b_4-a_4*b_5 )
mingens X
X==Y
W=ideal(Y, a_2, b_2 )
W==Y
\end{verbatim}


\section{Macaulay2 code for orthogonal bundles on $\PP^3$} \label{ex-p3}

Below is the Macaulay2 code we used to establish the existence of indecomposable rank $4$ orthogonal instanton bundles of charge $2$ on $\PP^3$.

\begin{verbatim}
R:=QQ[x_0,x_1,x_2,x_3,b_1..b_8,a_1..a_8,j_1..j_16,i_1..i_16,h_1..h_16]
A0:=matrix{{a_1,a_2},{a_3,-a_1}}
A1:=matrix{{a_5,a_6},{a_7,-a_5}}
B0:=matrix{{b_1,b_2},{b_3,-b_1}}
B1:=matrix{{b_5,b_6},{b_7,-b_5}}
G:=matrix{{0,1},{-1,0}}
H:=matrix{{h_1,h_2,h_3,h_4},{h_5,h_6,h_7,h_8},{h_9,h_10,h_11,h_12},
{h_13,h_14,h_15,h_16}}
HI:=id_(R^4)
J0:=matrix{{j_1,0},{0,0},{0,0},{0,j_8}}
J1:=matrix{{0,0},{j_9,0},{0,j_16},{0,0}}
A:=A0*x_0 + A1*x_1
B:=B0*x_0 + B1*x_1
J:=J0*x_0 + J1*x_1
I0 := transpose G * transpose J0 * HI
I1 := transpose G * transpose J1 * HI
I := transpose G * transpose J * HI
G*A - transpose A * transpose G
G*B - transpose B * transpose G
A*B - B*A + I*J
A0*B0 - B0*A0 + I0*J0
A1*B1 - B1*A1 + I1*J1
A0*B1 - B1*A0 + B0*A1 - A1*B0 + I0*J1 + I1*J0
X:=ideal{b_3*a_2-b_2*a_3,b_7*a_6-b_6*a_7, -b_7*a_1 + b_5*a_3 + b_3*a_5 - b_1*a_7, 
b_6*a_1-b_5*a_2 - b_2*a_5 + b_1*a_6, b_7*a_2-b_6*a_3-b_3*a_6+b_2*a_7,
a_3,a_7,b_3,b_7}
Y:=ideal{X, b_2*a_1-b_1*a_2, -b_3*a_1+b_1*a_3, b_6*a_5-b_5*a_6, -b_7*a_5+b_5*a_7}
X==Y
Z:=ideal{Y,a_2,a_6,b_2,b_6}
Z==X
Z==Y
\end{verbatim}

\bibliography{instantons2}{}
\bibliographystyle{siam}

\end{document}